\newtheorem{theorem}{Theorem}[section]
\newtheorem{proposition}[theorem]{Proposition}
\newtheorem{corollary}[theorem]{Corollary}
\newtheorem{definition}[theorem]{Definition}
\DeclareMathOperator{\convw}{\xrightarrow[]{w}}
\DeclareMathOperator{\convo}{\xrightarrow[]{o}}
\DeclareMathOperator{\convtau}{\xrightarrow[]{\tau}}
\DeclareMathOperator{\convxi}{\xrightarrow[]{\xi}}
\DeclareMathOperator{\convctau}{\xrightarrow[]{c-\tau}}
\DeclareMathOperator{\convco}{\xrightarrow[]{c-o}}
\DeclareMathOperator{\convr}{\xrightarrow[]{ru}}
\DeclareMathOperator{\convcr}{\xrightarrow[]{c-ru}}
\renewcommand{\subsection}{\@startsection{subsection}{1}
{0pt}{3.25ex plus 1ex minus.2ex}{-1em}{\normalfont\normalsize\bf}}
\begin{document}

\title{{\bf Automatic boundedness of some operators between ordered and topological vector spaces}}
\maketitle
\author{\centering{{Eduard Emelyanov$^{1}$\\ 
\small $1$ Sobolev Institute of Mathematics, Novosibirsk, Russia}
\abstract{We study topological boundedness of order-to-topology bounded and order-to-topology continuous operators
from ordered vector spaces to topological vector spaces. The uniform boundedness principle for such operators is investigated.}

\vspace{3mm}
{\bf Keywords:} ordered vector space, topological vector space, ordered Banach space, order-to-topology bounded operator, order-to-topology continuous operator

\vspace{3mm}
{\bf MSC2020:} {\normalsize 46A40, 47B60, 47B65}
}}

\section{Introduction}

\hspace{4mm}
Order-to-topology continuous and order-to-norm bounded operators have been recently studied by different authors
in \cite{AEG2022,E0-2025,E1-2025,EEG2025,JAM2021,KTA2025,ZSC2023}. We extend some of their results to the setting 
of topological vector spaces and present conditions providing topological boundedness of such operators.

We abbreviate a normed (a topological, an ordered, an ordered normed, an ordered Banach) vector space by \text{\rm NS} 
(\text{\rm TVS}, \text{\rm OVS}, \text{\rm ONS}, \text{\rm OBS}).
In what follows, vector spaces are real, operators are linear, ${\cal L}(X,Y)$ stands for the space of operators from 
a vector space $X$ to a vector space $Y$, $B_X$ for the closed unit ball of a NS $X$, and $x_\alpha\downarrow 0$ for a decreasing net in an OVS 
such that $\inf\limits_\alpha x_\alpha=0$. A net $(x_\alpha)$ in an OVS $X$
\begin{enumerate}[-]
\item\ 
{\em order converges} to $x$ (o-converges to $x$, or $x_\alpha\convo x$) if there exists 
$g_\beta\downarrow 0$ in $X$ such that, for each $\beta$ there is $\alpha_\beta$ 
such that $\pm(x_\alpha-x)\le g_\beta$ for $\alpha\ge\alpha_\beta$. 
\item\ 
{\em relative uniform converges} to $x$ (ru-converges to $x$, or $x_\alpha\convr x$) 
if, for some $u\in X_+$ there exists an increasing sequence 
$(\alpha_n)$ of indices with $\pm(x_\alpha-x)\le\frac{1}{n}u$ for $\alpha\ge\alpha_n$. 
\end{enumerate}

We shall work with the following classes of operators. 

\begin{definition}\label{def 1}
An operator $T$ from an OVS $X$ to an OVS $Y$ is
\begin{enumerate}[$a)$]
\item\
{\em order bounded} if $T[a,b]$ is order bounded for every order interval $[a,b]$ in $X$
$($the set of such operators is a vector space which is denoted by ${\cal L}_{ob}(X,Y))$.
\item\
{\em order continuous} if $Tx_\alpha\convo 0$ in $Y$ whenever $x_\alpha\convo 0$
$($shortly, $T\in{\cal L}_{oc}(X,Y))$.
\item\
{\em ru-continuous} if $Tx_\alpha\convr 0$ in $Y$ whenever $x_\alpha\convr 0$ $($shortly, $T\in{\cal L}_{rc}(X,Y))$.
\end{enumerate}
An operator $T$ from an OVS $X$ to a TVS $(Y,\tau)$ is 
\begin{enumerate}
\item[$c)$]\
{\em order-to-topology continuous} if $Tx_\alpha\convtau 0$ whenever $x_\alpha\convo 0$ $($shortly, $T\in{\cal L}_{o{\tau}c}(X,Y))$. 
The set of such operators from an OVS $X$ to NS $Y$ is denoted by ${\cal L}_{onc}(X,Y)$. 
\item[$d)$]\
{\em ru-to-topology continuous} if $Tx_\alpha\convtau 0$ whenever $x_\alpha\convr 0$ $($shortly, $T\in{\cal L}_{r{\tau}c}(X,Y))$. 
The set of such operators from an OVS $X$ to NS $Y$ is denoted by ${\cal L}_{rnc}(X,Y)$. 
\item[$e)$]\
{\em order-to-topology bounded} if $T[a,b]$ is $\tau$-bounded for every $[a,b]\subseteq X$ $($shortly, $T\in{\cal L}_{o{\tau}b}(X,Y))$. 
The set of such operators from an OVS $X$ to a NS $Y$ is denoted by ${\cal L}_{onb}(X,Y)$. 
\end{enumerate}
The $\sigma$-versions of the above notions are obtained via replacing nets by sequences.
\end{definition}

We recall several definitions concerning collective convergences and collectively qualified sets of operators 
(see, \cite{AEG2024,E0a-2025,E0b-2025,E0-2025,E1-2025,EEG2025}).
Let ${\cal B}=\{(x^b_\alpha)_{\alpha\in A}\}_{b\in B}$ be a family of nets in a vector space $X$ which are indexed by a directed set $A$. If $X$ is an OVS, 
\begin{enumerate}[-]
\item\ 
${\cal B}$ {\em collective \text{\rm ru}-converges} to $0$ (briefly, ${\cal B}\convcr 0$)
if, for some $u\in X_+$ there exists an increasing sequence $(\alpha_n)$ of indices with $\pm x^b_\alpha\le\frac{1}{n}u$ for all
$\alpha\ge\alpha_n$ and $b\in B$. 
\item\ 
${\cal B}$ {\em collective \text{\rm o}-converges} to $0$ (briefly, ${\cal B}\convco 0$)
if there exists a net $(g_\beta)$, $g_\beta\downarrow 0$ in $X$ such that, for each $\beta$ there is $\alpha_\beta$
satisfying $\pm x^b_\alpha\le g_\beta$ for all $\alpha\ge\alpha_\beta$ and $b\in B$. 
\end{enumerate}
If $X=(X,\tau)$ is a TVS, 
\begin{enumerate}[-]
\item\ 
${\cal B}$ {\em collective $\tau$-converges} to $0$ (briefly, ${\cal B}\convctau 0$)
if, for each $U\in\tau(0)$ there exists $\alpha_U$ with $x^b_\alpha\in U$ for all $\alpha\ge\alpha_U$ and $b\in B$. 
\end{enumerate}

\medskip
\noindent
We shall use the following collective version of Definition \ref{def 1}.

\begin{definition}\label{def 2}
A set ${\cal T}$ of operators from an OVS $X$ to an OVS $Y$ is 
\begin{enumerate}[$a)$]
\item\ 
{\em collectively order bounded} $($${\cal T}\in\text{\bf L}_{ob}(X,Y)$$)$ if 
${\cal T}[a,b]$  is order bounded for every $[a,b]\subseteq X$. 
\item\ 
{\em collectively \text{\rm o}-continuous} $({\cal T}\in\text{\bf L}_{oc}(X,Y)$$)$ if 
${\cal T}x_\alpha\convco 0$ whenever $x_\alpha\convo 0$. 
\item\ 
{\em collectively \text{\rm ru}-continuous} $({\cal T}\in\text{\bf L}_{rc}(X,Y)$$)$ if 
${\cal T}x_\alpha\convcr 0$ whenever $x_\alpha\convr 0$. 
\end{enumerate}
A set ${\cal T}$ of operators from an OVS $X$ to a TVS $(Y,\tau)$ is 
\begin{enumerate}
\item[$c)$]\
{\em collectively order-to-topology continuous} $({\cal T}\in\text{\bf L}_{o{\tau}c}(X,Y)$$)$ if 
${\cal T}x_\alpha\convtau 0$ whenever $x_\alpha\convo 0$. 
The family of collectively order-to-norm continuous sets of operators from an OVS $X$ to a NS $Y$
is denoted by $\text{\bf L}_{onc}(X,Y)$.
\item[$d)$]\
{\em collectively \text{\rm ru}-to-topology continuous} $({\cal T}\in\text{\bf L}_{r{\tau}c}(X,Y)$$)$ if 
${\cal T}x_\alpha\convctau 0$ whenever $x_\alpha\convr 0$. 
The family of collectively ru-to-norm continuous sets of operators from an OVS $X$ to a NS $Y$
is denoted by $\text{\bf L}_{rnc}(X,Y)$.
\item[$e)$]\
{\em collectively order-to-topology bounded} $({\cal T}\in\text{\bf L}_{o{\tau}b}(X,Y)$$)$ if 
${\cal T}[a,b]$ is $\tau$-bounded for every $[a,b]\subseteq X$. 
The family of collectively order-to-norm bounded sets of operators from an OVS $X$ to a NS $Y$
is denoted by $\text{\bf L}_{onb}(X,Y)$.
\end{enumerate}
A set ${\cal T}$ of operators from a TVS $(X,\xi)$ to a TVS $(Y,\tau)$ is 
\begin{enumerate}[-]
\item\ 
{\em collectively continuous} $({\cal T}\in\text{\bf L}_{c}(X,Y)$$)$ if 
${\cal T}x_\alpha\convctau 0$ whenever $x_\alpha\convxi 0$. 
\item\ 
{\em collectively bounded} $({\cal T}\in\text{\bf L}_{b}(X,Y)$$)$ if 
${\cal T}U$ is $\tau$-bounded whenever $U$ is $\xi$-bounded. 
\end{enumerate}
The $\sigma$-versions of the above notions are obtained via replacing nets by sequences.
\end{definition}

The present note is organized as follows. We prove Theorem \ref{theorem 2}, which asserts 
collective \text{\rm ru}-to-topology continuity of collectively order-to-topology bounded sets.
In Theorem \ref{theorem 2a}, conditions for the inclusion $\text{\bf L}_{o{\tau}c}(X,Y)\subseteq\text{\bf L}_{o{\tau}b}(X,Y)$ are given.
Theorem \ref{theorem 1} gives conditions for the inclusion $\text{\bf L}_{o{\tau}b}(X,Y)\subseteq\text{\bf L}_{n{\tau}b}(X,Y)$.

For the terminology and notations that are not explained in the text, we refer to \cite{AB2003,AT2007}.

\section{Main results}

\hspace{4mm}
We start with the following theorem which asserts that collectively order-to-topology bounded sets 
quite often  agree with collectively \text{\rm ru}-to-topology continuous sets.

\begin{theorem}\label{theorem 2}
Let $X$ be an OVS and $(Y,\tau)$ a TVS. Then $\text{\bf L}_{o{\tau}b}(X,Y)\subseteq\text{\bf L}_{r{\tau}c}(X,Y)$.
If additionally $X_+$ is generating then $\text{\bf L}_{o{\tau}b}(X,Y)=\text{\bf L}_{r{\tau}c}(X,Y)$.
\end{theorem}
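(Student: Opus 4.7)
The plan is to prove the two inclusions separately.

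\emph{First inclusion} ($\text{\bf L}_{o\tau b}(X,Y)\subseteq\text{\bf L}_{r\tau c}(X,Y)$). I would take $\mathcal{T}\in\text{\bf L}_{o\tau b}(X,Y)$ and a net $x_\alpha\convr 0$, witnessed by some $u\in X_+$ and an increasing sequence $(\alpha_n)$ with $\pm x_\alpha\le\frac{1}{n}u$, equivalently $x_\alpha\in\frac{1}{n}[-u,u]$, whenever $\alpha\ge\alpha_n$. Fix a balanced $V\in\tau(0)$. By $\tau$-boundedness of $\mathcal{T}[-u,u]$ there is $\lambda>0$ with $\mathcal{T}[-u,u]\subseteq\lambda V$, and for any integer $n\ge\lambda$ we get $\{Tx_\alpha:T\in\mathcal{T},\,\alpha\ge\alpha_n\}\subseteq\frac{1}{n}\mathcal{T}[-u,u]\subseteq V$. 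This is precisely $\mathcal{T}x_\alpha\convctau 0$.

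\emph{Second inclusion} (assuming $X_+$ is generating). Given $\mathcal{T}\in\text{\bf L}_{r\tau c}(X,Y)$ and an order interval $[a,b]\subseteq X$, I would first reduce the $\tau$-boundedness of $\mathcal{T}[a,b]$ to that of $\mathcal{T}[0,u]$ for arbitrary $u\in X_+$. Writing $[a,b]=a+[0,b-a]$ and using the generating hypothesis to split $a=a_1-a_2$ with $a_i\in X_+$, one has the set inclusion $\mathcal{T}[a,b]\subseteq\mathcal{T}[0,a_1]-\mathcal{T}[0,a_2]+\mathcal{T}[0,b-a]$; since sums and differences of $\tau$-bounded sets are $\tau$-bounded in a TVS, it suffices to show that $\mathcal{T}[0,u]$ is $\tau$-bounded for each $u\in X_+$.

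This core claim I would prove by contradiction. If $\mathcal{T}[0,u]$ fails to be $\tau$-bounded, then some balanced $U\in\tau(0)$ admits sequences $(T_n)\subseteq\mathcal{T}$ and $(x_n)\subseteq[0,u]$ with $T_nx_n\notin nU$. Scaling via $y_n:=\frac{1}{n}x_n$ yields $0\le y_n\le\frac{1}{n}u$, so $y_n\convr 0$. Collective ru-to-topology continuity of $\mathcal{T}$ then produces some $n_0$ with $Ty_n\in U$ for all $T\in\mathcal{T}$ and $n\ge n_0$, which contradicts $T_ny_n=\frac{1}{n}T_nx_n\notin U$. The only mild subtlety I anticipate is the decomposition step, where the generating assumption on $X_+$ is needed to absorb the ``translate'' part $a$ of $[a,b]$; the scaling trick in the contradiction argument is the standard Banach--Steinhaus-style move and presents no real difficulty.
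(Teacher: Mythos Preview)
Your proposal is correct and follows essentially the same strategy as the paper. The only cosmetic differences are that the paper argues the first inclusion by contradiction (rather than your cleaner direct argument via balanced neighbourhoods), and for the second inclusion the paper reduces to a single symmetric interval $[-x,x]$ with $x\in X_+$ (using $[a,b]\subseteq[-x,x]$ for a suitable $x$) instead of your three-term decomposition $\mathcal{T}[0,a_1]-\mathcal{T}[0,a_2]+\mathcal{T}[0,b-a]$; the scaling contradiction that follows is identical.
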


\begin{proof}
Assume, in contrary, ${\cal T}\in\text{\bf L}_{o{\tau}b}(X,Y)\setminus\text{\bf L}_{r{\tau}c}(X,Y)$.
Then, ${\cal T}x_\alpha\not\convctau 0$ for some $x_\alpha\convr 0$. So, there exists an absorbing $U\in\tau(0)$ such that, 
for every $\alpha$ there exist $\alpha'\ge\alpha$ and $T_\alpha\in{\cal T}$ with $T_\alpha x_{\alpha'}\notin U$.
Since $x_\alpha\convr 0$, for some $u\in X_+$ there exists an increasing sequence 
$(\alpha_n)$ of indices with $\pm nx_\alpha\le u$ for $\alpha\ge\alpha_n$. 
It follows from ${\cal T}\in\text{\bf L}_{o{\tau}b}(X,Y)$ that
${\cal T}[-u,u]\subseteq NU$ for some $N\in\mathbb{N}$.
Since $nx_\alpha\in[-u,u]$ for $\alpha\ge\alpha_n$ and $(\alpha_n)'\ge\alpha_n$ then
$nx_{(\alpha_n)'}\in[-u,u]$, and hence $T_{\alpha_n}(nx_{(\alpha_n)'})\in NU$ for every $n$.
In particular, $T_{\alpha_N}(x_{(\alpha_N)'})\in U$ which is absurd.
We conclude $\text{\bf L}_{o{\tau}b}(X,Y)\subseteq\text{\bf L}_{r{\tau}c}(X,Y)$.

Now, suppose $X_+$ is generating, and let ${\cal T}\in\text{\bf L}_{r{\tau}c}(X,Y)$. Assume, in contrary, ${\cal T}\notin\text{\bf L}_{o{\tau}b}(X,Y)$.
Since $X=X_+-X_+$, there exist $x\in X_+$ and an absorbing $U\in\tau(0)$ with ${\cal T}[-x,x]\not\subseteq nU$ for every $n\in\mathbb{N}$.  
Find sequences  $(x_n)$ in $[-x,x]$ and $(T_n)$ in ${\cal T}$ with $T_nx_n\notin nU$ for all $n$.
Since $\frac{1}{n}x_n\convr 0$ then ${\cal T}(\frac{1}{n}x_n)\convctau 0$.
So, there exists a sequence $(n_k)$ such that $T(\frac{1}{n}x_n)\in U$ for all $n\ge n_k$ and $T\in{\cal T}$. 
Then $T_{n_1}x_{n_1}\in n_1U$, which is a contradiction. Therefore, ${\cal T}\in\text{\bf L}_{o{\tau}b}(X,Y)$.
The proof is complete. 
\end{proof}

\begin{corollary}\label{cor 1 to theorem 2}
Every order-to-topology bounded operator from an OVS $X$ to a TVS $(Y,\tau)$ is ru-to-topology continuous.
Assuming in addition that $X_+$ is generating, ${\cal L}_{o{\tau}b}(X,Y)={\cal L}_{r{\tau}c}(X,Y)$.
\end{corollary}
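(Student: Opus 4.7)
The plan is to deduce the corollary as a direct specialization of Theorem \ref{theorem 2} to singleton sets. I would first observe that for a singleton $\mathcal{T}=\{T\}$ the collective notions of Definition \ref{def 2} coincide with their individual counterparts from Definition \ref{def 1}: indeed, $\{T\}[a,b] = T[a,b]$ so $\{T\}\in\text{\bf L}_{o\tau b}(X,Y)$ iff $T\in\mathcal{L}_{o\tau b}(X,Y)$; similarly, $\{T\}x_\alpha\convctau 0$ is just $Tx_\alpha\convtau 0$, so $\{T\}\in\text{\bf L}_{r\tau c}(X,Y)$ iff $T\in\mathcal{L}_{r\tau c}(X,Y)$.

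Given this identification, the first statement follows immediately: if $T\in\mathcal{L}_{o\tau b}(X,Y)$, then $\{T\}\in\text{\bf L}_{o\tau b}(X,Y)$, so by Theorem \ref{theorem 2} we have $\{T\}\in\text{\bf L}_{r\tau c}(X,Y)$, i.e.\ $T\in\mathcal{L}_{r\tau c}(X,Y)$. For the second statement under the assumption that $X_+$ is generating, I would apply the equality $\text{\bf L}_{o\tau b}(X,Y)=\text{\bf L}_{r\tau c}(X,Y)$ from Theorem \ref{theorem 2} to singletons in both directions, obtaining $\mathcal{L}_{o\tau b}(X,Y)=\mathcal{L}_{r\tau c}(X,Y)$.

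There is essentially no obstacle here: the corollary is a formal consequence of Theorem \ref{theorem 2}, and the only thing to verify is the trivial compatibility between the singleton case of the collective definitions and the operator-wise definitions. Consequently, the write-up can be limited to a single sentence invoking Theorem \ref{theorem 2} with $\mathcal{T}=\{T\}$.
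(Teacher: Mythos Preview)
Your proposal is correct and matches the paper's approach exactly: the corollary is stated in the paper without proof, as an immediate specialization of Theorem~\ref{theorem 2} to singleton sets $\mathcal{T}=\{T\}$, which is precisely what you do.
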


The following result may be viewed as a topological version of \cite[Theorem 2.1]{EEG2025} 
(in the vector lattice setting see also \cite[Lemma 1.72]{AB2003}, \cite[Theorem 2.1]{AS2005}, and \cite[Theorem 2.1]{E0-2025}).

\begin{theorem}\label{theorem 2a}
Let $X$ be an Archimedean OVS with a generating cone and $(Y,\tau)$ a TVS. 
Then $\text{\bf L}_{o{\tau}c}(X,Y)\subseteq\text{\bf L}_{o{\tau}b}(X,Y)$.
\end{theorem}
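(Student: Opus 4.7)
The plan is to argue by contradiction, closely following the structure of the second half of the proof of Theorem \ref{theorem 2}, but substituting o-convergence for ru-convergence; the bridge between the two is provided by the Archimedean hypothesis.

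First I would reduce to symmetric order intervals. Given any $[a,b] \subseteq X$, the generating-cone hypothesis lets me decompose $a = a_1 - a_2$ and $b = b_1 - b_2$ with $a_i, b_i \in X_+$. Setting $u := a_1 + a_2 + b_1 + b_2 \in X_+$, one checks that $\pm x \le u$ for every $x \in [a,b]$, so $[a,b] \subseteq [-u,u]$. It therefore suffices to show that ${\cal T}[-u,u]$ is $\tau$-bounded for arbitrary $u \in X_+$ and ${\cal T} \in \text{\bf L}_{o{\tau}c}(X,Y)$.

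Next, assuming by contradiction that ${\cal T}[-u,u]$ is not $\tau$-bounded, I would pick a balanced $U \in \tau(0)$ with ${\cal T}[-u,u] \not\subseteq nU$ for every $n$, and extract sequences $(T_n) \subseteq {\cal T}$, $(x_n) \subseteq [-u,u]$ with $T_n x_n \notin nU$. The key move is to set $y_n := \frac{1}{n}x_n$; then $\pm y_n \le \frac{1}{n}u$, and because $X$ is Archimedean the decreasing sequence $g_n := \frac{1}{n}u$ satisfies $g_n \downarrow 0$, so $y_n \convo 0$. Applying collective order-to-topology continuity yields ${\cal T}y_n \convctau 0$, hence for some $n_0$ and every $T \in {\cal T}$ one has $Ty_n \in U$ when $n \ge n_0$; in particular $T_n x_n = n\,T_n y_n \in nU$ for $n \ge n_0$, contradicting $T_n x_n \notin nU$.

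The main obstacle is essentially conceptual: recognizing that the two hypotheses play complementary roles. The generating cone is what allows us to shrink an arbitrary order interval to a symmetric one $[-u,u]$, while the Archimedean property is precisely what converts the rescaling $x_n \mapsto \frac{1}{n}x_n$ (which on its own only provides ru-convergence) into genuine o-convergence, the notion required to activate the assumption ${\cal T} \in \text{\bf L}_{o{\tau}c}(X,Y)$. Once this is spotted, the remaining steps are a direct imitation of the contradiction pattern already used in the second half of Theorem \ref{theorem 2}.
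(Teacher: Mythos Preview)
Your argument is correct. The underlying idea coincides with the paper's: use the Archimedean property to pass from a rescaled sequence $\frac{1}{n}x_n$ (which a priori only ru-converges) to an o-convergent one, and then invoke ${\cal T}\in\text{\bf L}_{o\tau c}(X,Y)$ to reach a contradiction.

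The packaging, however, differs. The paper does not repeat the construction from the second half of Theorem~\ref{theorem 2}; instead it invokes that theorem as a black box: from ${\cal T}\notin\text{\bf L}_{o\tau b}(X,Y)$ and $X_+$ generating it concludes ${\cal T}\notin\text{\bf L}_{r\tau c}(X,Y)$, obtains an abstract net $x_\alpha\convr 0$ with ${\cal T}x_\alpha\not\convctau 0$, and then simply observes that in an Archimedean space $x_\alpha\convr 0$ implies $x_\alpha\convo 0$, contradicting ${\cal T}\in\text{\bf L}_{o\tau c}(X,Y)$. Your version is self-contained and makes the mechanism transparent (one sees exactly where the generating cone and the Archimedean property enter), at the cost of reproving part of Theorem~\ref{theorem 2}; the paper's version is shorter and more modular, highlighting the chain $\text{\bf L}_{o\tau c}\subseteq\text{\bf L}_{r\tau c}=\text{\bf L}_{o\tau b}$ of inclusions.
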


\begin{proof}
We argue to a contradiction supposing ${\cal T}\in\text{\bf L}_{o{\tau}c}(X,Y)\setminus\text{\bf L}_{o{\tau}b}(X,Y)$.
Theorem \ref{theorem 2} implies ${\cal T}\notin\text{\bf L}_{r{\tau}c}(X,Y)$.
So, ${\cal T}x_\alpha\not\convctau 0$ for some net $(x_\alpha)$ in $X$ such that $x_\alpha\convr 0$.
Since $X$ is Archimedean then $x_\alpha\convo 0$, and hence ${\cal T}x_\alpha\convctau 0$
because of ${\cal T}\in\text{\bf L}_{o{\tau}c}(X,Y)$. The obtained contradiction completes the proof.
\end{proof}

\begin{corollary}\label{cor 1 to theorem 2a}
Every order-to-topology continuous operator from an Archimedean OVS with a generating cone to a TVS is order-to-topology bounded.
\end{corollary}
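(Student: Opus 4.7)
The corollary is essentially the singleton specialization of Theorem \ref{theorem 2a}, so the plan is to reduce it immediately to that result. Given $T\in{\cal L}_{o\tau c}(X,Y)$, I would form the singleton family ${\cal T}=\{T\}$ and first verify that ${\cal T}\in\text{\bf L}_{o\tau c}(X,Y)$. This is immediate from the definitions: for any net $x_\alpha\convo 0$, the hypothesis $T\in{\cal L}_{o\tau c}(X,Y)$ gives $Tx_\alpha\convtau 0$, and collective $\tau$-convergence of a one-element family is nothing but ordinary $\tau$-convergence of the single net $(Tx_\alpha)$, which meets the definition of collective order-to-topology continuity.

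Having established ${\cal T}\in\text{\bf L}_{o\tau c}(X,Y)$, I would then invoke Theorem \ref{theorem 2a}, whose hypotheses (Archimedean $X$ with generating cone, $Y$ a TVS) are exactly those assumed in the corollary. This yields ${\cal T}\in\text{\bf L}_{o\tau b}(X,Y)$, meaning ${\cal T}[a,b]=T[a,b]$ is $\tau$-bounded for every order interval $[a,b]\subseteq X$. Translating back, this is precisely the statement that $T\in{\cal L}_{o\tau b}(X,Y)$, i.e.\ $T$ is order-to-topology bounded.

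There is no real obstacle here, since the corollary follows by specialization. The only conceptual check is that the collective definitions in Definition \ref{def 2} genuinely reduce to the individual definitions in Definition \ref{def 1} when the indexing family $B$ is a singleton; both for order-to-topology continuity (part $c$) and for order-to-topology boundedness (part $e$), this reduction is transparent from the definitions. Thus the full proof can be written in a few lines, with the substantive content entirely encapsulated in Theorem \ref{theorem 2a}.
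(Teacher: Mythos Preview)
Your proposal is correct and matches the paper's approach: the corollary is stated without proof immediately after Theorem \ref{theorem 2a}, so it is understood as the singleton specialization you describe. Your verification that the collective definitions reduce to the individual ones for a one-element family is exactly the (trivial) content needed.
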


It is well known that $\text{\bf L}_{c}(X,Y)=\text{\bf L}_{b}(X,Y)$ whenever $X$ and $Y$ are NSs. 
The following result can be viewed as a partial extension of \cite[Theorem 2.1]{E0-2025} and \cite[Theorem 2.8]{EEG2025}.

\begin{theorem}\label{theorem 1}
Let $X$ be an OBS with a closed generating cone and $(Y,\tau)$ a TVS. Then $\text{\bf L}_{o{\tau}b}(X,Y)\subseteq\text{\bf L}_{n{\tau}b}(X,Y)$.
\end{theorem}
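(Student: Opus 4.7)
My plan is to argue by contradiction using a standard Banach-space "absorbing majorant" construction, with the Andô--Krein decomposition for ordered Banach spaces playing the key role. Suppose ${\cal T}\in\text{\bf L}_{o{\tau}b}(X,Y)\setminus\text{\bf L}_{n{\tau}b}(X,Y)$. Then ${\cal T}B_X$ is not $\tau$-bounded, so there exists a balanced $V\in\tau(0)$ with ${\cal T}B_X\not\subseteq n^{3}V$ for any $n$; pick $T_n\in{\cal T}$ and $x_n\in B_X$ with $T_nx_n\notin n^{3}V$. The factor $n^3$ (rather than $n$) is chosen so the final inequality still wins over the scaling factor we will have to absorb.

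Next I would invoke the Andô--Krein theorem: since $X$ is an OBS with closed generating cone, there is a constant $M>0$ such that every $x\in X$ admits a decomposition $x=y-z$ with $y,z\in X_+$ and $\|y\|,\|z\|\le M\|x\|$. Writing $x_n=y_n-z_n$ in this fashion, one has $\|y_n\|,\|z_n\|\le M$ for all $n$. The idea now is to build a single positive element dominating every $x_n$ (up to a scaling factor). Since $\sum_{k=1}^{\infty}(\|y_k\|+\|z_k\|)/k^{2}<\infty$ and $X$ is complete, the series
$$u:=\sum_{k=1}^{\infty}\frac{y_k+z_k}{k^{2}}$$
converges in norm; closedness of $X_+$ ensures $u\in X_+$, and the same closedness applied to the partial-sum tails gives $(y_n+z_n)/n^{2}\le u$. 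In particular $\pm x_n/n^{2}\le u$, so $x_n/n^{2}\in[-u,u]$ for every $n$.

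Finally, I would apply the hypothesis ${\cal T}\in\text{\bf L}_{o{\tau}b}(X,Y)$ to the order interval $[-u,u]$: this yields $N\in\mathbb{N}$ with ${\cal T}[-u,u]\subseteq NV$, so $T_n x_n\in n^{2}NV$ for every $n$. Since $V$ is balanced, $n^{2}NV\subseteq n^{3}V$ whenever $n\ge N$, and choosing any such $n$ contradicts $T_n x_n\notin n^{3}V$, completing the proof. The only nontrivial ingredient I expect to need is the Andô--Krein decomposition; without the closedness of the generating cone in a complete space, one could not guarantee both the norm control on $y_n,z_n$ and the convergence of the series defining $u$, and the reduction to a single order interval would collapse.
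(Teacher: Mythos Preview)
Your proof is correct and follows essentially the same route as the paper's: argue by contradiction, use the Krein--Smulian/And\^o decomposition to control the positive parts, build a single majorant $u$ via the convergent series $\sum k^{-2}(y_k+z_k)$, and then exploit the $\tau$-boundedness of ${\cal T}[-u,u]$ together with the $n^{3}$ slack to reach a contradiction. The only cosmetic difference is that the paper first reduces to positive $x_n\in B_X\cap X_+$ (using the set-inclusion form $\alpha B_X\subseteq B_X\cap X_+ - B_X\cap X_+$) and works with $[0,x]$, whereas you decompose each $x_n$ individually and work with $[-u,u]$; the underlying mechanism is identical.
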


\begin{proof}
Let ${\cal T}\in\text{\bf L}_{o{\tau}b}(X,Y)$. Suppose, in contrary, ${\cal T}\notin\text{\bf L}_{n{\tau}b}(X,Y)$.
By the Krein -- Smulian theorem (cf., \cite[Theorem 2.37]{AT2007})
$\alpha B_X\subseteq B_X\cap X_+-B_X\cap X_+$ for some $\alpha>0$,
and hence ${\cal T}(B_X\cap X_+)$ is not $\tau$-bounded. Then, there exists an absorbing $U\in\tau(0)$ with
${\cal T}(B_X\cap X_+)\not\subseteq nU$ for every $n\in\mathbb{N}$. So, for some  
sequences $(x_n)$ in $B_X\cap X_+$ and $(T_n)$ in ${\cal T}$ we have $T_nx_n\notin n^3U$ for all $n$.
Set $x:=\|\cdot\|$-$\sum\limits_{n=1}^\infty n^{-2}x_n\in X_+$.
Since ${\cal T}\in\text{\bf L}_{o{\tau}b}(X,Y)$ then ${\cal T}[0,x]\subseteq NU$ for some $N\in\mathbb{N}$.
We deduce from $n^{-2}x_n\in[0,x]$ that $T_n(n^{-2}x_n)\in NU\subseteq nU$ for large enough $n$.
This is absurd, because $T_n(n^{-2}x_n)\notin nU$ for every $n$. The proof is complete. 
\end{proof}

\noindent 
Norm completeness of $X$ is essential in Theorem \ref{theorem 1}. For example, an operator $T\in{\cal L}(c_{00})$
defined by $Tx=(\sum_{k=1}^\infty x_k)e_1$ is order-to-norm bounded, yet not bounded.
To see that the condition $X_+-X_+=X$ is essential, it is enough to take any unbounded operator $T$ on a Banach space
$X$ with a trivial cone $X_+=\{0\}$. The closeness of $X_+$ in a Banach space $X$ is also essential \cite[Example 2.12 b)]{EEG2025}.
The next corollary a kind of Uniform Boundedness Principle for
families of operators which need not to be continuous a priory.

\begin{corollary}\label{cor 1 to theorem 1}
Every collectively order-to-norm bounded set of operators from an OBS with a closed generating cone to a NS is uniformly bounded.
\end{corollary}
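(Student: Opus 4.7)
The plan is to derive the statement as an essentially immediate specialization of Theorem~\ref{theorem 1}. Concretely, I would take the target TVS $(Y,\tau)$ in Theorem~\ref{theorem 1} to be the given normed space $Y$ equipped with its norm topology. The hypotheses that $X$ be an OBS with closed generating cone and that the codomain be a TVS match the hypotheses of the corollary verbatim (since any NS is a TVS), so Theorem~\ref{theorem 1} applies and yields $\mathbf{L}_{onb}(X,Y)\subseteq\mathbf{L}_{n\tau b}(X,Y)$.

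Next I would unpack what membership in $\mathbf{L}_{n\tau b}(X,Y)$ means in the purely normed setting. By the definition of collective boundedness, a family $\mathcal{T}$ from the NS $X$ to the NS $Y$ lies in this class exactly when $\mathcal{T}(B_X)$ is norm-bounded in $Y$; hence there exists $M>0$ with $\|Tx\|\le M$ for all $T\in\mathcal{T}$ and all $x\in B_X$. Taking the supremum over $x\in B_X$ shows that each $T\in\mathcal{T}$ is a bounded linear operator and that $\sup_{T\in\mathcal{T}}\|T\|\le M$, which is the uniform boundedness assertion.

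There is no genuine obstacle beyond this unpacking: the corollary is just the normed-range instance of Theorem~\ref{theorem 1}. The point I would highlight in the write-up is that no a priori norm-continuity of the individual operators $T\in\mathcal{T}$ is assumed; the boundedness of each $T$, as well as the uniform bound on $\{\|T\|:T\in\mathcal{T}\}$, is \emph{forced} by the sole hypothesis of collective order-to-norm boundedness together with the Krein--Smulian-type splitting of $B_X$ already exploited in Theorem~\ref{theorem 1}. This is precisely what justifies viewing the statement as a form of the uniform boundedness principle in the ordered Banach setting.
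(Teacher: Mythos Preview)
Your proposal is correct and matches the paper's approach: the corollary is stated immediately after Theorem~\ref{theorem 1} with no separate proof, precisely because it is the normed-space specialization you describe. Your unpacking of $\mathbf{L}_{n\tau b}$ in the normed setting as $\sup_{T\in\mathcal{T}}\|T\|<\infty$ is the intended reading.
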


\begin{corollary}\label{cor 1a to theorem 1}
Every order-to-norm bounded operator from an OBS with a closed generating cone to a NS is bounded.
\end{corollary}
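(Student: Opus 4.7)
The plan is to observe that this corollary is an immediate specialization of Corollary \ref{cor 1 to theorem 1} (equivalently, of Theorem \ref{theorem 1}) to the singleton family $\{T\}$. Concretely, given an order-to-norm bounded operator $T\in{\cal L}_{onb}(X,Y)$, I would first note that the one-element set ${\cal T}:=\{T\}$ lies in $\text{\bf L}_{onb}(X,Y)$: indeed, ${\cal T}[a,b]=T[a,b]$ is norm-bounded for every order interval $[a,b]\subseteq X$, which is exactly the defining condition of collective order-to-norm boundedness.

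Next, I would invoke Theorem \ref{theorem 1} with the topology $\tau$ on $Y$ taken to be the norm topology, yielding ${\cal T}\in\text{\bf L}_{n\tau b}(X,Y)$. Unwinding this for the singleton ${\cal T}=\{T\}$, it says precisely that $T(U)$ is norm-bounded in $Y$ whenever $U$ is norm-bounded in $X$; applying this to $U=B_X$ gives that $T(B_X)$ is norm-bounded, i.e., $T$ is a bounded linear operator.

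There is essentially no genuine obstacle to overcome here, since the work has already been done in Theorem \ref{theorem 1}; the content of the corollary is just the passage from the collective statement to its singleton instance, combined with the observation that in the case of NS target the notions of $\tau$-boundedness and norm-boundedness coincide. The only small point worth mentioning explicitly is that the hypotheses on $X$ (an OBS with closed generating cone) are inherited verbatim from Theorem \ref{theorem 1}, and the three examples following Theorem \ref{theorem 1} (failure of completeness, failure of $X_+-X_+=X$, and failure of closedness of $X_+$) already demonstrate that each of these hypotheses is needed for the corollary as well.
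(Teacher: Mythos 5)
Your proposal is correct and matches the paper's (implicit) reasoning exactly: the corollary is stated without separate proof as the singleton specialization of Theorem \ref{theorem 1} (with $\tau$ the norm topology on $Y$), which is precisely the argument you give. Nothing further is needed.
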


\noindent
The following corollary of Theorem \ref{theorem 1} provides an automatic continuity result extending the well known fact 
(see, for example \cite[Theorem 2.32]{AT2007}) that every positive operator from an OBS with a closes generating cone
to an OBS with a closed cone is continuous. 

\begin{corollary}\label{cor 3 to theorem 1}
Every order bounded operator from an OBS with a closed generating cone to an ONS with a normal cone is continuous.
\end{corollary}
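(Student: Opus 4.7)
The plan is to deduce the result directly from Corollary \ref{cor 1a to theorem 1} by verifying that, under the hypotheses on $Y$, order boundedness of the operator automatically upgrades to order-to-norm boundedness. Once that is established, the OBS/closed generating cone hypothesis on $X$ permits us to invoke Corollary \ref{cor 1a to theorem 1} to conclude norm continuity.

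In detail, I would proceed as follows. Let $T\in{\cal L}_{ob}(X,Y)$, and fix an arbitrary order interval $[a,b]\subseteq X$. By hypothesis, $T[a,b]$ is order bounded in $Y$, so there exist $u,v\in Y$ with $T[a,b]\subseteq[u,v]$. The key step is then to recall the standard characterization of normal cones in an ONS: $Y_+$ is normal if and only if every order bounded subset of $Y$ is norm bounded (this is a classical fact, cf.\ \cite[Theorem 2.38]{AT2007}). Applying this to $[u,v]$, we get that $T[a,b]$ is norm bounded in $Y$, and since $[a,b]$ was arbitrary, this says exactly that $T\in{\cal L}_{onb}(X,Y)$.

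At this point the hypotheses of Corollary \ref{cor 1a to theorem 1} are satisfied: $X$ is an OBS with closed generating cone and $Y$ is a NS. That corollary therefore delivers boundedness, i.e.\ norm continuity, of $T$, completing the proof.

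The argument is essentially a two-line assembly, so there is no real obstacle; the only point to verify carefully is that the normality of $Y_+$ is being used in the correct direction (order bounded $\Rightarrow$ norm bounded), which is the standard equivalent formulation rather than the defining inequality $0\le y\le z\Rightarrow\|y\|\le M\|z\|$. Any other hypothesis weakening (dropping completeness of $X$, closedness of $X_+$, generation of $X$ by $X_+$, or normality of $Y_+$) already fails at the level of Theorem \ref{theorem 1} or its counterexamples, which confirms that the corollary uses the minimal framework of its parent theorem combined only with the order-norm compatibility on the target.
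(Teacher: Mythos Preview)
Your argument is correct and is precisely the intended deduction: normality of $Y_+$ turns order boundedness of $T$ into order-to-norm boundedness, and then Corollary~\ref{cor 1a to theorem 1} (equivalently Theorem~\ref{theorem 1} applied to the singleton $\{T\}$) yields norm boundedness. The paper does not spell out a proof for this corollary, but your two-step reduction is exactly what the positioning as a corollary of Theorem~\ref{theorem 1} is meant to convey.
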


\noindent
Since in NSs bounded operators agree with continuous, and since weak compact sets in Banach spaces are bounded, 
we obtain the following consequence of Theorem \ref{theorem 1}.

\begin{corollary}\label{cor 2 to theorem 1}
Every order-to-weak compact operator from an OBS with a closed generating cone to a Banach space is continuous.
\end{corollary}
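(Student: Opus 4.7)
The plan is to chain together a short sequence of standard reductions, with the heavy lifting already done by Theorem \ref{theorem 1} (equivalently, by Corollary \ref{cor 1a to theorem 1}). Interpreting ``order-to-weak compact'' in the natural way, an operator $T\in{\cal L}(X,Y)$ is order-to-weak compact whenever $T[a,b]$ is relatively weakly compact in $Y$ for every order interval $[a,b]\subseteq X$.

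First I would note that any relatively weakly compact subset of a Banach space is weakly bounded, and hence norm bounded by the uniform boundedness principle applied to the evaluations $\langle y^\ast,\cdot\rangle$ for $y^\ast\in Y^\ast$ (this is precisely the fact cited just before the corollary). Applied to each $T[a,b]$, this tells me that the image $T[a,b]$ is norm bounded for every order interval $[a,b]\subseteq X$, i.e., $T\in{\cal L}_{onb}(X,Y)$.

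Next I would invoke Corollary \ref{cor 1a to theorem 1}: since $X$ is an OBS with a closed generating cone, every order-to-norm bounded operator from $X$ into the normed space $Y$ is norm bounded. In the Banach space context norm boundedness coincides with continuity, so $T$ is continuous.

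Neither step presents a real obstacle; the only care needed is the first one, where one must remember that weak compactness transfers to norm boundedness via the uniform boundedness principle (applied in $Y^{\ast\ast}$ or directly to the dual pairing), since this is what lets us feed the hypothesis into the order-to-norm boundedness framework in which Theorem \ref{theorem 1} is phrased. All other transitions are immediate once this observation is made.
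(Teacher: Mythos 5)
Your argument is correct and matches the paper's own reasoning: the paper derives the corollary from Theorem \ref{theorem 1} by noting that weakly compact sets in Banach spaces are norm bounded (so $T\in{\cal L}_{onb}(X,Y)$) and that bounded operators between normed spaces are continuous. Your only addition is spelling out the uniform boundedness step behind ``weakly compact implies norm bounded,'' which the paper takes as known.
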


\noindent
Collective boundedness of a semigroup generated by a single operator is known as power boundedness of the operator.
We say that an operator $T$ on an ordered TVS $(X,\tau)$ is power order-to-topology bounded if the set $\bigcup\limits_{n=1}^\infty T^n[a,b]$
is $\tau$-bounded for every $[a,b]$ in $X$. An operator semigroup ${\cal S}$ on an ordered TVS OVS $(X,\tau)$ is order-to-topology bounded 
if the set $\bigcup\limits_{T\in{\cal S}}T[a,b]$ is $\tau$-bounded for every $[a,b]$ in $X$.
The next two corollaries deal with these notions.

\begin{corollary}\label{cor 4 to theorem 1}
Every power order-to-norm bounded operator on an OBS with a closed generating cone is power bounded.
\end{corollary}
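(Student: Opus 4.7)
My plan is to interpret power order-to-norm boundedness as a collective-boundedness statement and then apply Corollary~\ref{cor 1 to theorem 1}. First I would form the family $\mathcal{T}:=\{T^n:n\in\mathbb{N}\}\subseteq\mathcal{L}(X)$. Looking at the definition stated just before the corollary, the assumption that $T$ is power order-to-norm bounded says exactly that the set
\[
\mathcal{T}[a,b]\;=\;\bigcup_{n=1}^\infty T^n[a,b]
\]
is norm-bounded for every order interval $[a,b]\subseteq X$. This is literally the defining property of $\mathcal{T}\in\mathbf{L}_{onb}(X,X)$.

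Next I would invoke Corollary~\ref{cor 1 to theorem 1}: since $X$ is an OBS with a closed generating cone and the codomain is the NS $X$ itself, the corollary applies and forces $\mathcal{T}$ to be uniformly bounded. Unpacking this last phrase yields $\sup_{n\in\mathbb{N}}\|T^n\|<\infty$, which is precisely the definition of $T$ being power bounded.

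I do not expect any real obstacle here; the entire argument amounts to translating between single-operator language (``power \dots'') and family language ($\mathbf{L}_{onb}$), and then quoting the uniform boundedness principle that was already established for such families. The only sanity check worth mentioning is that the ambient OBS $X$ plays the role of both the ordered domain and the normed codomain, which causes no difficulty since its norm topology provides both the order-interval images and the target uniform bound.
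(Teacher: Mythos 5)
Your proof is correct and is essentially the argument the paper intends: power order-to-norm boundedness of $T$ is by definition collective order-to-norm boundedness of the family $\{T^n\}_{n\in\mathbb{N}}$, and Corollary \ref{cor 1 to theorem 1} (equivalently Theorem \ref{theorem 1}) then gives uniform boundedness of that family, i.e.\ $\sup_n\|T^n\|<\infty$. Nothing further is needed.
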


\begin{corollary}\label{cor 5 to theorem 1}
Every power order-to-norm bounded operator semigroup on an OBS with a closed generating cone is uniformly continuous.
\end{corollary}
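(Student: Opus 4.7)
The plan is to derive Corollary \ref{cor 5 to theorem 1} as an immediate specialization of Corollary \ref{cor 1 to theorem 1}. The first step is purely definitional: by the definition given just above the corollary, an operator semigroup $\mathcal{S}$ on an OBS $X$ is (power) order-to-norm bounded precisely when $\bigcup_{T\in\mathcal{S}}T[a,b]$ is norm-bounded for every order interval $[a,b]\subseteq X$. But this is exactly the requirement that $\mathcal{S}\in\text{\bf L}_{onb}(X,X)$, that is, $\mathcal{S}$ is a collectively order-to-norm bounded set of operators.

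With $\mathcal{S}$ recast as a member of $\text{\bf L}_{onb}(X,X)$, the next step is to invoke Corollary \ref{cor 1 to theorem 1}, whose hypotheses on the source space coincide with the standing assumption that $X$ is an OBS with a closed generating cone. That corollary delivers $\sup_{T\in\mathcal{S}}\|T\|<\infty$. By linearity, a family of linear operators that is uniformly bounded in operator norm is equicontinuous, which is the meaning of ``uniformly continuous'' for the semigroup $\mathcal{S}$; in particular every $T\in\mathcal{S}$ is continuous, uniformly in $T$.

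I do not anticipate any substantive obstacle beyond a small terminological one: the phrase ``power order-to-norm bounded operator semigroup'' must be read through the definition just preceding the corollary (the collective boundedness of $\bigcup_{T\in\mathcal{S}}T[a,b]$), not as the power boundedness of a single generator of $\mathcal{S}$. Once this identification is made, no further computation remains, and the corollary becomes a one-line consequence of the uniform boundedness principle already proved in Theorem \ref{theorem 1} and Corollary \ref{cor 1 to theorem 1}.
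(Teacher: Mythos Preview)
Your proposal is correct and matches the paper's intent: the corollary is stated without proof precisely because it is the immediate specialization you describe, obtained by reading the semigroup hypothesis as $\mathcal{S}\in\text{\bf L}_{onb}(X,X)$ and invoking Corollary~\ref{cor 1 to theorem 1}. Your remark about the slightly loose phrase ``power order-to-norm bounded operator semigroup'' is also apt; the paper's preceding definition confirms that the intended reading is collective order-to-norm boundedness of $\mathcal{S}$.
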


Since $\text{\bf L}_{b}(X,Y)\subseteq\text{\bf L}_{onb}(X,Y)$ whenever $X$ is a normal OVS and $Y$ is a NS, 
the next collective extension of \cite[Proposition 1.5]{E1-2025} follows from Theorems \ref{theorem 1} and \ref{theorem 2}.

\begin{proposition}\label{theorem 3}
Let $X$ be an OBS with a closed generating normal cone and $Y$ a NS. 
Then $\text{\bf L}_{rnc}(X,Y)=\text{\bf L}_{onb}(X,Y)=\text{\bf L}_{b}(X,Y)$.
\end{proposition}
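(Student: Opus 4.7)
The plan is to close the chain of equalities by assembling Theorems \ref{theorem 2} and \ref{theorem 1} with the standard fact that a normal cone forces order intervals to be norm bounded. I would organize the argument as three separate inclusions covering the two equalities in turn.

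First, since the cone $X_+$ is closed and generating, it is in particular generating, so Theorem \ref{theorem 2} applied to the TVS obtained from $Y$ by its norm topology delivers $\text{\bf L}_{o\tau b}(X,Y)=\text{\bf L}_{r\tau c}(X,Y)$, which translated into the normed-target notation of Definition \ref{def 2} is exactly $\text{\bf L}_{onb}(X,Y)=\text{\bf L}_{rnc}(X,Y)$. This handles the first of the two equalities. Next, $X$ is an OBS with closed generating cone, so Theorem \ref{theorem 1} (again with $\tau$ specialized to the norm topology on $Y$) yields $\text{\bf L}_{onb}(X,Y)\subseteq\text{\bf L}_{b}(X,Y)$, because $\text{\bf L}_{n\tau b}(X,Y)$ collapses to $\text{\bf L}_{b}(X,Y)$ when the target topology is the norm topology.

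The reverse inclusion $\text{\bf L}_{b}(X,Y)\subseteq\text{\bf L}_{onb}(X,Y)$ is the point where I would invoke normality of $X_+$: it is a standard property of normal cones in an ONS (see, e.g., \cite{AT2007}) that every order interval $[a,b]\subseteq X$ is norm bounded. Consequently, if ${\cal T}$ is collectively norm bounded, then ${\cal T}[a,b]$ is the image of a norm-bounded set under a norm-bounded family, hence norm bounded in $Y$; thus ${\cal T}\in\text{\bf L}_{onb}(X,Y)$. Combining this with the inclusion from Theorem \ref{theorem 1} produces $\text{\bf L}_{onb}(X,Y)=\text{\bf L}_{b}(X,Y)$, which together with the equality obtained from Theorem \ref{theorem 2} gives the full chain.

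I do not anticipate a real obstacle: all of the analytic work is already packaged inside Theorems \ref{theorem 1} and \ref{theorem 2}, and the remark preceding the statement of Proposition \ref{theorem 3} already flags that normality is precisely what gives $\text{\bf L}_{b}(X,Y)\subseteq\text{\bf L}_{onb}(X,Y)$. The only step requiring care is the notational translation between the $\tau$-based families and the norm-based families, i.e., specializing $(Y,\tau)$ to $Y$ equipped with its norm topology so that $\text{\bf L}_{o\tau b}$, $\text{\bf L}_{r\tau c}$, and $\text{\bf L}_{n\tau b}$ collapse, respectively, to $\text{\bf L}_{onb}$, $\text{\bf L}_{rnc}$, and $\text{\bf L}_{b}$.
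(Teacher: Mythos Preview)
Your proposal is correct and follows exactly the route the paper takes: invoke Theorem~\ref{theorem 2} (using that $X_+$ is generating) for $\text{\bf L}_{rnc}(X,Y)=\text{\bf L}_{onb}(X,Y)$, Theorem~\ref{theorem 1} for $\text{\bf L}_{onb}(X,Y)\subseteq\text{\bf L}_{b}(X,Y)$, and normality of $X_+$ for the reverse inclusion $\text{\bf L}_{b}(X,Y)\subseteq\text{\bf L}_{onb}(X,Y)$. The paper presents this identically, with the normality observation placed in the sentence immediately preceding the proposition.
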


\begin{corollary}\label{cor 1 to theorem 3}
Let $X$ be an OBS with a closed generating normal cone and $Y$ a NS. 
Then ${\cal L}_{rnc}(X,Y)={\cal L}_{onb}(X,Y)={\cal L}_{b}(X,Y)$.
\end{corollary}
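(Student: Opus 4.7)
The plan is to deduce Corollary \ref{cor 1 to theorem 3} from Proposition \ref{theorem 3} by specialising collective statements about a set $\mathcal T$ of operators to the singleton set $\mathcal T=\{T\}$. The three identifications I would record are
\[
T\in\mathcal L_{onb}(X,Y)\Longleftrightarrow\{T\}\in\text{\bf L}_{onb}(X,Y),
\]
\[
T\in\mathcal L_{rnc}(X,Y)\Longleftrightarrow\{T\}\in\text{\bf L}_{rnc}(X,Y),
\]
\[
T\in\mathcal L_{b}(X,Y)\Longleftrightarrow\{T\}\in\text{\bf L}_{b}(X,Y).
\]
Each one is immediate from the definitions: for the first, the set $\mathcal T[a,b]$ of Definition~\ref{def 2}$e)$ reduces to $T[a,b]$; for the third, $\mathcal TU=TU$; and for the second, a single net $(Tx_\alpha)$ is $\tau$-convergent to $0$ in $Y$ iff the one-element family $\{(Tx_\alpha)\}$ collectively $\tau$-converges to $0$, which is just unpacking the definition of $\convctau$.

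With these observations in hand the corollary is a direct read-off of Proposition~\ref{theorem 3}. Indeed, if $T\in\mathcal L_{rnc}(X,Y)$ then $\{T\}\in\text{\bf L}_{rnc}(X,Y)=\text{\bf L}_{onb}(X,Y)=\text{\bf L}_{b}(X,Y)$ by the proposition, whence $T\in\mathcal L_{onb}(X,Y)\cap\mathcal L_b(X,Y)$; the two reverse chains are obtained symmetrically by starting from $T\in\mathcal L_{onb}(X,Y)$ and $T\in\mathcal L_b(X,Y)$ respectively and applying the same three-term equality to $\{T\}$.

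There is no genuine obstacle: the only thing worth double-checking is the singleton case of Definitions~\ref{def 1} and~\ref{def 2}, namely that the collective notions specialise exactly to their individual counterparts when $|\mathcal T|=1$. Once this routine verification is noted, the corollary requires no further argument beyond citing Proposition~\ref{theorem 3}.
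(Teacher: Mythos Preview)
Your proposal is correct and matches the paper's intent: the corollary is stated immediately after Proposition~\ref{theorem 3} with no proof, so the intended argument is precisely the specialisation of the collective equalities to the singleton $\mathcal T=\{T\}$ that you describe.
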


\noindent
By \cite[Theorem 2.4]{EEG2025}, ${\cal L}_{rc}(X,Y)={\cal L}_{ob}(X,Y)$ whenever $X$ and $Y$ are OVSs with generating cones.
Therefore, we have one more consequence of Proposition \ref{theorem 3} that provides conditions for automatic boundedness of
ru-continuous operators.

\begin{corollary}\label{cor 2 to theorem 3}
Let $X$ be an OBS with a closed generating normal cone and $Y$ an OVS with a generating normal cone. 
Then ${\cal L}_{rc}(X,Y)\subseteq{\cal L}_{b}(X,Y)$.
\end{corollary}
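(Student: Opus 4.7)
My plan is to chain \cite[Theorem~2.4]{EEG2025} with Corollary~\ref{cor 1 to theorem 3}, using the normality of $Y_+$ as the bridge that turns order-boundedness of images into norm-boundedness. Given $T\in{\cal L}_{rc}(X,Y)$, I would first apply \cite[Theorem~2.4]{EEG2025}, whose hypothesis is that both $X$ and $Y$ are OVSs with generating cones; this is satisfied here (the cone of $X$ is assumed generating and closed, the cone of $Y$ generating and normal). It yields ${\cal L}_{rc}(X,Y)={\cal L}_{ob}(X,Y)$, so $T$ is order bounded: $T[a,b]$ lies in some order interval of $Y$ for every $[a,b]\subseteq X$.

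Next I would invoke the normality of $Y_+$. Reading ``generating normal cone'' in the standard way endows $Y$ with an ONS structure, and normality of the cone means exactly that order bounded subsets of $Y$ are automatically norm bounded. Combined with the previous step, this upgrades $T[a,b]$ from order bounded to norm bounded for every order interval $[a,b]\subseteq X$, so $T\in{\cal L}_{onb}(X,Y)$.

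Finally, Corollary~\ref{cor 1 to theorem 3} (the operator version of Proposition~\ref{theorem 3}) gives ${\cal L}_{onb}(X,Y)={\cal L}_{b}(X,Y)$, since $X$ is an OBS with a closed generating normal cone and $Y$ is a NS. Therefore $T\in{\cal L}_{b}(X,Y)$, which establishes the inclusion ${\cal L}_{rc}(X,Y)\subseteq{\cal L}_{b}(X,Y)$.

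The entire argument is a three-step composition of cited results, so there is no substantive obstacle; the only point requiring care is the consistent interpretation of ``OVS with a generating normal cone'' as implicitly giving $Y$ a norm, since only with such a reading does normality deliver the implication ``order bounded $\Rightarrow$ norm bounded'' on which the bridge step relies.
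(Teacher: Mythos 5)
Your proposal is correct and follows essentially the same route the paper intends: \cite[Theorem 2.4]{EEG2025} gives ${\cal L}_{rc}(X,Y)={\cal L}_{ob}(X,Y)$, normality of $Y_+$ upgrades order bounded images to norm bounded ones, and Corollary \ref{cor 1 to theorem 3} (equivalently Proposition \ref{theorem 3}) completes the chain. Your closing caveat — that ``OVS with a generating normal cone'' must be read as implicitly equipping $Y$ with a norm so that normality yields ``order bounded $\Rightarrow$ norm bounded'' — is precisely the reading the paper relies on.
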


The following proposition is a minor extension of \cite[Lemma 2.1]{E1-2025}.

\begin{proposition}\label{last}
Let $X$ be a normal OBS and $(Y,\tau)$ a Banach space with a dual topology. Then $\text{\rm L}_{o{\tau}c}(X,Y)\subseteq\text{\rm L}_{onb}(X,Y)$.
\end{proposition}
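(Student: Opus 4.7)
The plan is a proof by contradiction resting on two pillars. First, since $\tau$ is a dual topology on the Banach space $Y$, the Mackey theorem yields that $\tau$-bounded subsets of $Y$ coincide with the norm-bounded ones, so it suffices to establish $\tau$-boundedness of $T[a,b]$. Second, normality of the cone of $X$ forces the Archimedean property, so $(b-a)/n\downarrow 0$ for every order interval $[a,b]\subseteq X$, which makes the rescaled sequence $(x_n-a)/n$ order-null whenever $(x_n)\subseteq[a,b]$.

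Assume, toward a contradiction, that some $T\in\text{L}_{o\tau c}(X,Y)$ is not order-to-norm bounded. Then $T[a,b]$ fails to be norm-bounded for some $[a,b]\subseteq X$, hence fails to be $\tau$-bounded, so one may choose a balanced convex $U\in\tau(0)$ and $x_n\in[a,b]$ with $Tx_n\notin n^{2}U$ for every $n$. Set $y_n:=(x_n-a)/n$. Then $\pm y_n\le(b-a)/n\downarrow 0$, whence $y_n\convo 0$, and order-to-$\tau$ continuity yields $Ty_n\convtau 0$; in particular, for all sufficiently large $n$, $Ty_n\in U/4$, which rescales to $T(x_n-a)\in(n/4)U\subseteq(n^{2}/4)U$ since $U$ is balanced. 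Decomposing $Tx_n=Ta+T(x_n-a)$, convexity and balancedness of $U$ show that if $Ta$ were to lie in $(3n^{2}/4)U$ then $Tx_n\in(3n^{2}/4)U+(n^{2}/4)U=n^{2}U$, contradicting the choice of $x_n$; hence $Ta\notin(3n^{2}/4)U$ for all large $n$. But $Ta$ is a single fixed vector and $U$ is absorbing, so $Ta\in MU$ for some $M>0$, and the desired contradiction emerges as soon as $3n^{2}/4>M$.

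The one subtlety is the matching of scales on the two sides of the decomposition — a quadratic failure bound $n^{2}U$ against the linear rescaling $(x_n-a)/n$ — so that the rescaled image becomes negligible relative to an arbitrarily small fraction of $n^{2}U$ and the absorbing character of $U$ then disposes of the fixed residual $Ta$. Once this is in place the argument proceeds almost mechanically, relying only on the Mackey-theorem reduction $\tau$-bounded $\Leftrightarrow$ norm-bounded together with the Archimedean property supplied by cone normality.
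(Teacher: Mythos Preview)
Your argument is correct and takes a genuinely different route from the paper's proof. The paper exploits norm completeness of $X$: having chosen $u_n\in[0,u]$ with $\|Tu_n\|\ge n2^n$, it forms the norm-convergent tails $y_n:=\sum_{k\ge n}2^{-k}u_k$, checks $y_n\downarrow 0$ (using normality to force any lower bound to be $0$), and then derives a contradiction from the telescoping estimate $\|Ty_{n+1}-Ty_n\|=\|T(2^{-n}u_n)\|\ge n$ against the norm boundedness of the $\tau$-null (hence weakly null) sequence $(Ty_n)$. Your proof bypasses the series construction entirely: you rescale directly via $y_n=(x_n-a)/n$, invoke the Archimedean property (which normality supplies) to get $y_n\convo 0$, and then play the failure bound $Tx_n\notin n^2U$ off against the decomposition $Tx_n=Ta+T(x_n-a)$ inside a balanced convex $\tau$-neighborhood, with the Mackey identification of $\tau$-bounded and norm-bounded sets handling the reduction. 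The payoff of your approach is that it never uses completeness of $X$, so it in fact proves the stronger statement with $X$ merely a normal ordered normed space; the paper's series-based construction, on the other hand, is a reusable template that produces a single monotone witness $y_n\downarrow 0$ encoding all the bad points at once.
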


\begin{proof}
Let $T:X\to Y$ be \text{\rm o$\tau$}-continuous. Suppose, in contrary, $T\notin\text{\rm L}_{onb}(X,Y)$.
Then $T[0,u]$ is not bounded for some $u\in X_+$. 
Since $X$ is normal, $[0,u]$ is bounded, say $\sup\limits_{x\in[0,u]}\|x\|\le M$. 
Pick a sequence $(u_n)$ in $[0,u]$ with $\|Tu_n\|\ge n2^n$, and set $y_n:=\|\cdot\|$-$\sum\limits_{k=n}^\infty 2^{-k}u_k$ for $n\in\mathbb{N}$.
Then $y_n\downarrow\ge 0$. Let $0\le y_0\le y_n$ for all $n\in\mathbb{N}$.
Since $0\le y_0\le 2^{1-n}u$ and $\|2^{1-n}u\|\le M2^{1-n}\to 0$ then $y_0=0$ by \cite[Theorem 2.23]{AT2007}.
Since $y_n\downarrow 0$ then $y_n\convo 0$. We deduce from $T\in\text{\rm L}_{o{\tau}c}(X,Y)$ that $Ty_n\convtau 0$.
Since the topology $\tau$ is dual then $Ty_n\convw 0$.
Therefore, the sequence $(T_ny_n)$ is norm bounded. This is absurd because
$\|Ty_{n+1}-Ty_n\|=\|T(2^{-n}u_n)\|\ge n$ for all $n\in\mathbb{N}$. The proof is complete.
\end{proof}

\begin{corollary}\label{cor 1 to last}
Every order-to-topology continuous operator from a normal OBS to a Banach space with a dual topology is order-to-norm bounded.
\end{corollary}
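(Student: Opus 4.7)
The plan is to read this corollary off directly from Proposition \ref{last}. In the notation of Section 1, ${\cal L}_{o{\tau}c}(X,Y)$ is the space of order-to-topology continuous operators and ${\cal L}_{onb}(X,Y)$ the space of order-to-norm bounded operators, so the inclusion established in the proposition is already the stated claim once the symbols are unpacked. (The proof of Proposition \ref{last} in fact argues with a single operator $T$, so nothing extra is required.)

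If one preferred to argue independently, the proof would simply replay Proposition \ref{last} for a singleton ${\cal T}=\{T\}$. Suppose for contradiction that $T$ is order-to-topology continuous but $T[0,u]$ is norm-unbounded for some $u\in X_+$. Normality of $X_+$ makes $[0,u]$ norm bounded, say $\sup_{x\in[0,u]}\|x\|\le M$; pick $u_n\in[0,u]$ with $\|Tu_n\|\ge n2^n$ and set $y_n:=\sum_{k=n}^\infty 2^{-k}u_k\in X_+$, which is well defined by norm completeness of $X$. Then $(y_n)$ is decreasing, and the infimum argument of Proposition \ref{last}, invoking \cite[Theorem 2.23]{AT2007}, goes through verbatim to yield $y_n\downarrow 0$, hence $y_n\convo 0$. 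Order-to-topology continuity of $T$ gives $Ty_n\convtau 0$, and since $\tau$ is a dual topology this upgrades to $Ty_n\convw 0$, so in particular the sequence $(Ty_n)$ is norm bounded. This contradicts $\|Ty_{n+1}-Ty_n\|=2^{-n}\|Tu_n\|\ge n$.

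I expect no real obstacle here, since all the substantive work was carried out in Proposition \ref{last}; the corollary is genuinely a notational restatement. The only step where the hypotheses bite — and the only one worth flagging — is the passage ``a $\tau$-null sequence in $Y$ is norm bounded'', which is where the dual-topology assumption on $\tau$ is used; everything else relies only on normality of $X_+$ and norm completeness of $X$.
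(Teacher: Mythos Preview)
Your proposal is correct and matches the paper's approach: the corollary is stated without a separate proof, as an immediate restatement of Proposition~\ref{last} for a single operator. Your optional replay of the argument also tracks the paper's proof of Proposition~\ref{last} essentially verbatim.
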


\noindent
Combining Corollaries \ref{cor 1 to last} and \ref{cor 1a to theorem 1}, we obtain the last result of the paper.

\begin{corollary}\label{cor 2 to last}
Every order-to-topology continuous operator from an OBS with a closed generating normal cone to a Banach space with a dual topology is bounded.
\end{corollary}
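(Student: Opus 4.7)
The plan is to chain the two cited corollaries in sequence, observing that the hypotheses of the target statement split cleanly into the hypotheses required by each. Let $T : X \to Y$ be order-to-topology continuous, where $X$ is an OBS with closed generating normal cone and $Y$ is a Banach space equipped with a dual topology $\tau$.

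First, I would apply Corollary \ref{cor 1 to last} to deduce that $T$ is order-to-norm bounded. The hypotheses required there are that the domain is a normal OBS and that the codomain is a Banach space with a dual topology, both of which are direct consequences of our assumptions: normality of the cone in $X$ is exactly what is meant by a normal OBS, and $Y$ is given as a Banach space with a dual topology. Hence $T \in \mathcal{L}_{onb}(X,Y)$, i.e., $T$ sends order intervals to norm-bounded sets in $Y$.

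Next, I would feed this conclusion into Corollary \ref{cor 1a to theorem 1}, whose hypotheses are that the domain is an OBS with a closed generating cone and the codomain is a NS. Both hold: the closedness and generating property of $X_+$ are in our assumptions, and every Banach space is in particular a NS. The corollary then delivers that $T$ is norm-bounded, which is the desired conclusion.

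There is essentially no obstacle to overcome beyond bookkeeping: the only points to verify are the compatibility of the hypotheses at each step (normality is used to pass from $o\tau$-continuity to $on$-boundedness via Corollary \ref{cor 1 to last}, and closedness plus generating property is used to pass from $on$-boundedness to norm-boundedness via Corollary \ref{cor 1a to theorem 1}), and these match exactly. No additional lemmas or auxiliary constructions are needed.
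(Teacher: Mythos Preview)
Your proposal is correct and follows exactly the paper's own argument: the paper simply states that the result is obtained by combining Corollary~\ref{cor 1 to last} (to pass from $o\tau$-continuity to order-to-norm boundedness, using normality) and Corollary~\ref{cor 1a to theorem 1} (to pass from order-to-norm boundedness to norm boundedness, using the closed generating cone). Your bookkeeping of which hypotheses feed into which step is accurate.
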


\smallskip
{\normalsize 
}
\end{document}